\newtheorem{theorem}{Theorem}[section]
\newtheorem{definition}{Definition}[section]
\numberwithin{equation}{section}
\title{Generalized Chebyshev polynomials of the second kind}
\author{Mohammad A. ALQUDAH
}
\begin{document}
\maketitle

\begin{abstract}
We characterize the generalized Chebyshev polynomials of the second kind (Chebyshev-II), then we provide a closed form of the generalized Chebyshev-II polynomials using Bernstein basis. These polynomials can be used to describe the approximation of continuous functions by Chebyshev
interpolation and Chebyshev series and how to compute efficiently such approximations. We conclude the paper with some results concerning  integrals of the generalized Chebyshev-II and Bernstein polynomials. 
\end{abstract}
Key words: Generalized Chebyshev polynomials, Bernstein basis, Eulerian integral\\
2010 AMS Mathematics Subject Classification: 33C45, 42C05, 33D05, 33C05

\section{Introduction, background and motivation}
Orthogonal polynomials are very important and serve to approximate other functions, where the most commonly used orthogonal polynomials
are the classical orthogonal polynomials. The field of classical orthogonal polynomials developed in the late 19th century from a study
 of continued fractions by P.L. Chebyshev.

We have seen the significance of orthogonal polynomials, particularly in the solution of systems of linear equations and in the least-squares approximations. 
Meanwhile, polynomials can be represented in many different bases such as the monomial powers, Chebyshev, Bernstein, and Hermite bases forms.
Every form of polynomial basis has its advantages, and sometimes disadvantages. Many problems can be solved and many difficulties can be removed by suitable choice of basis.

In this paper we characterize the generalized Chebyshev polynomials of the second kind (Chebyshev-II) $\mathscr{U}_{r}^{(M,N)}(x).$ These polynomials can 
be used describe the approximation of continuous functions by Chebyshev interpolation and Chebyshev series and how to compute efficiently 
such approximations.

\subsection{Bernstein polynomials}
We recall a very concise overview of well-known results on Bernstein polynomials, followed by a brief summary of important properties.
\begin{definition}The $n+1$ Bernstein polynomials $B_{k}^{n}(x)$ of degree $n,$ $x\in [0,1], k=0,1,\ldots,n,$ are defined by:
\begin{equation}\label{binomial} B_{k}^{n}(x)=\left\{\begin{array}{ll}\binom {n}{k} x^{k}(1-x)^{n-k} & k=0,1,\dots,n \\0 & else\end{array}\right.,\end{equation}
where the binomial coefficients $$\binom{n}{k}=\frac{n!}{k!(n-k)!},\hspace{.1in}k=0,\dots,n.$$
\end{definition}

The Bernstein polynomials have been studied thoroughly and there are a fair amount of literature on these polynomials, they are known for their geometric properties \cite{Farin,Hoschek}, and the Bernstein basis form is known to be optimally stable.
They are all non-negative, $B_{k}^{n}(x)\geq 0,$ $x\in[0,1],$ satisfy symmetry relation $B_{k}^{n}(x)=B_{n-k}^{n}(1-x),$
and the product of two Bernstein polynomials is also a Bernstein polynomial which is given by
$$B_{i}^{n}(x)B_{j}^{m}(x)=\frac{\binom{n}{i}\binom{m}{j}}{\binom{n+m}{i+j}}B_{i+j}^{n+m}(x).$$

The Bernstein polynomials of degree $n$ can be defined by combining two Bernstein polynomials
of degree $n-1,$ where the $k$th $n$th-degree Bernstein polynomial can be written by the known recurrence relation as
$$B_{k}^{n}(x)=(1-x)B_{k}^{n-1}(x)+xB_{k-1}^{n-1}(x), \hspace{.1in} k=0,\dots,n; n\geq 1$$ where $B_{0}^{0}(x)=0$ and $B_{k}^{n}(x)=0$ for $k<0$ or $k>n.$
Moreover, it is possible to write each Bernstein polynomials of degree $r$ where $r\leq n$ in terms of Bernstein polynomials of degree $n$ using the following degree elevation \cite{Farouki3}:
\begin{equation}\label{ber-elv} B_{k}^{r}(x)=\sum_{i=k}^{n-r+k}\frac{\binom{r}{k}\binom{n-r}{i-k}}{\binom{n}{i}}B_{i}^{n}(x),\hspace{.1in}k=0,1,\dots,r.
\end{equation}
In addition, the Bernstein polynomials can be differentiated and integrated easily as 
$$\frac{d}{dx}B_{k}^{n}(x)=n[B_{k-1}^{n-1}(x)-B_{k}^{n-1}(x)],\hspace{.1in}n\geq 1,$$
and
\begin{equation}\label{bern-intg}\int_{0}^{1}B_{k}^{n}(x)dx=\frac{1}{n+1},\hspace{.1in} k=0,1,\dots,n.\end{equation}
 
These analytic and geometric properties of Bernstein polynomials with the advent of computer graphics made Bernstein polynomials important in the
 form of B\'{e}zier curves and B\'{e}zier surfaces in Computer Aided Geometric Design (CAGD). The Bernstein polynomials are the standard basis for the B\'{e}zier  representations of curves and surfaces in CAGD.

However, the Bernstein polynomials are not orthogonal and could not be used effectively in the least-squares approximation \cite{Rice}. Since then a theory
 of approximation has been developed and many approximation methods have been introduced and analyzed. The method of least squares approximation accompanied
 by orthogonal polynomials is one of these approximation methods.

\subsection{Least-square approximation}
The idea of least squares can be applied to approximating a given function by a weighted sum of other functions. The best approximation  can be defined as that which minimizes the difference between the original function and the approximation; for a least-squares approach the quality  of the approximation is measured in terms of the squared differences between the two. The following will briefly refresh our background information to enable us to combine the superior least square performance of the generalized Chebyshev-II polynomials with the geometric insight of the Bernstein form.
\begin{definition}
For a function $f(x),$ continuous on $[0,1]$ the least square approximation requires finding a polynomial (Least-Squares Polynomial)
$$p_{n}(x)=a_{0}\varphi_{0}(x)+a_{1}\varphi_{1}(x)+\dots+a_{n}\varphi_{n}(x)$$ 
that minimize the error
$$E(a_{0},a_{1},\dots,a_{n})=\int_{0}^{1}[f(x)-p_{n}(x)]^{2}dx.$$
\end{definition}

For minimization, the partial derivatives must satisfy
$$\frac{\partial E}{\partial a_{i}}=0, i=0,\dots,n.$$
These conditions will give rise to a system of $n+1$ normal equations in $n+1$ unknowns: $a_{0},a_{1},\dots,a_{n}.$ Solution of these equations
 will yield the unknowns: $a_{0},a_{1},\dots,a_{n}$ of the least-squares polynomial $p_{n}(x).$ 
It is important to choose a suitable basis, for example by choosing $\varphi_{i}(x)=x^{i},$ the matrix coefficients of the normal equations given as
$$(H_{n+1}(0,1))_{i,k}=\int_{0}^{1}x^{i+1}dx, \hspace{.05in} 0\leq i,k\leq n,$$
which is Hilbert matrix that has round-off error difficulties and notoriously ill-conditioned for even modest values of $n.$

However, the computations can be made efficient by using orthogonal polynomials.  Choosing $\{\varphi_{0}(x),\varphi_{1}(x),\dots,\varphi_{n}(x)\}$
 to be orthogonal simplifies the least-squares approximation procedures. The matrix of the normal equations will be diagonal which simplifies calculations
 and gives a compact closed form for $a_{i}, i=0,1,\dots,n.$ 

Moreover, knowing $p_{n}(x)$ will be sufficient to compute $a_{n+1}$ to get  $p_{n+1}(x).$  See \cite{Rice} for more details on the least squares  approximations.

\subsection{Factorial minus half}
We present some results concerning factorials, double factorials and some combinatorial identities.
The double factorial of an integer $n$ is given by
\begin{equation}\label{douvle-factrrial}
\begin{aligned}
(2n-1)!!&=(2n-1)(2n-3)(2n-5)\dots(3)(1) \hspace{.2in}  \text{if $n$ is odd} \\
n!!&=(n)(n-2)(n-4)\dots(4)(2) \hspace{.63in} \text{if $n$ is even},
\end{aligned}
\end{equation}
where $0!!=(-1)!!=1.$

From \eqref{douvle-factrrial}, we can derive the following relation for factorials
\begin{equation}\label{doublefac} n!!=\left\{\begin{array}{ll}2^{\frac{n}{2}}(\frac{n}{2})! & \text{if $n$ is even} \\
\frac{n!}{2^{\frac{n-1}{2}}(\frac{n-1}{2})!} & \text{if $n$ is odd} \end{array}\right..\end{equation}
From \eqref{doublefac} we obtain
\begin{equation}\label{eq1}(2n)!!=[2(n)][2(n-1)]\dots[2.1]=2^{n}n!,\end{equation}
and
\begin{equation}\label{eq2}
(2n)!
=[(2n-1)(2n-3)\dots(1)]\left([2(n)][2(n-1)][2(n-2)]\dots [2(1)]\right)=(2n-1)!!2^{n}n!.
\end{equation}

By combining \eqref{eq1} and \eqref{eq2}, we get 
\begin{equation}\label{half-double}
\binom{2n}{n}=\frac{2^{2n}(2n-1)!!}{(2n)!!}.
\end{equation}
In addition, using \eqref{eq2} and with some simplifications we obtain
\begin{equation}\label{2nd-eq}
\frac{\binom{2n}{2k}}{\binom{n}{k}}=\frac{(2n-1)!!}{(2k-1)!!(2n-2k-1)!!}.
\end{equation}

\subsection{Univariate Chebyshev-II polynomials}
Let us first consider a definition and some properties of the univariate Chebyshev polynomials of the second kind.
\begin{definition}(Chebyshev polynomials of the second kind $U_{n}(x)$).
The Chebyshev polynomial of the second kind of order $n$ is defined as follows:
\begin{equation}
U_{n}(x)=\frac{\sin[(n+1)\cos^{-1}(x)]}{sin[\cos^{-1}(x)]}, x\in[-1,1],\hspace{.1in} n=0,1,2,\dots.
\end{equation}
From this definition the following property is evident:
\begin{equation}
U_{n}(x)=\frac{\sin(n+1)\theta}{sin\theta}, \hspace{.1in} x=\cos\theta.
\end{equation}
\end{definition}
The Chebyshev polynomials are special cases of Jacobi polynomials $P_{n}^{(\alpha,\beta)}(x),$ and related as
\begin{equation}\label{Tsch-Jac-rel}U_{n}(x)=(n+1)\binom{n+\frac{1}{2}}{n}^{-1}P_{n}^{(\frac{1}{2},\frac{1}{2})}(x).\end{equation}

Authors are not uniform in orthogonal polynomials notations,  and for the convenience we recall the following explicit expressions
for univariate Chebyshev-II polynomials of degree $n$ in $x$, see Szeg\"{o} \cite{Szego}:
\begin{equation}
U_{n}(x):=\frac{(n+1)(2n)!!}{(2n+1)!!}\sum_{k=0}^{n}\binom{n+\frac{1}{2}}{n-k}\binom{n+\frac{1}{2}}{k}\left(\frac{x+1}{2}\right)^{n-k}\left(\frac{x-1}{2}\right)^{k},
\end{equation}
which it can be transformed in terms of Bernstein basis on $x\in[0,1]$,
\begin{equation}\label{Chebyshev-II-basis fromat}
U_{n}(2x-1):=\frac{(n+1)(2n)!!}{(2n+1)!!}\sum_{k=0}^{n}(-1)^{n+1}\frac{\binom{n+\frac{1}{2}}{k}\binom{n+\frac{1}{2}}{n-k}}{\binom{n}{k}}B_{k}^{n}(x).
\end{equation}
Also, it may be represented in terms of Gauss hypergeometric series as follows \cite{Olver}: 
\begin{equation}\label{hypergeo}
U_{n}(x):=(n+1) {}_{2}F_{1}\left(\begin{matrix}-n,n+2\\ \frac{3}{2}\end{matrix}; \frac{1-x}{2}\right).
\end{equation}
Although the Pochhammer symbol is more appropriate, but the combinatorial notation gives more compact and clear formulas, these have also been used by Szeg\"{o} \cite{Szego}.

\subsubsection{Properties of the Chebyshev-II polynomials $U_{n}(x)$}
The Chebyshev-II polynomials $U_{n}(x)$ of degree $n$ are orthogonal polynomials, except for a constant factor, with respect to the weight function
$$\mathrm{W}(x)=\sqrt{1-x^{2}}.$$
In addition, Chebyshev-II polynomials satisfy the orthogonality relation \cite{Gradshtein}
\begin{equation}\label{ortho-rel}
\int_{0}^{1}x^{\frac{1}{2}}(1-x)^{\frac{1}{2}}U_{n}(x)U_{m}(x)dx=\left\{\begin{array}{ll} 
0             & \mbox{if } m\neq n \\
\frac{\pi}{8} & \mbox{if } m=n
\end{array}\right..
\end{equation}
The univariate classical orthogonal polynomials are traditional defined on $[-1,1],$ however, it is more convenient to use $[0,1].$  

\section{Main results}
In this section, we characterize the generalized Chebyshev-II polynomials $\mathscr{U}_{n}^{(M,N)}(x),$  then we write them using Bernstein basis.
Finally, we conclude the section with the explicit formula for the generalized Chebyshev-II polynomials using Bernstein basis.
\subsection{Characterization}
Using the relation \eqref{Tsch-Jac-rel} and a construction similar to \cite{Bav-Koek2,Koornwinder} for $M, N\geq 0,$ the generalized Chebyshev-II polynomials
$\left\{\mathscr{U}_{n}^{(M,N)}(x)\right\}_{n=0}^{\infty}$ can be written as
\begin{equation}\label{gen-Chebyshev-II}
\mathscr{U}_{n}^{(M,N)}(x)=\frac{(2n+1)!!}{2^{n}(n+1)!}U_{n}(x)+MQ_{n}(x)+NR_{n}(x)+MNS_{n}(x),\hspace{.05in}n=0,1,2,\dots
\end{equation}
where for $n=1,2,3,\dots$
\begin{equation}\label{q-fn}
Q_{n}(x)=
\frac{(2n+1)!!}{3.2^{n}n!}\left[n(n+2)U_{n}(x)-\frac{3}{2}(x-1)DU_{n}(x)\right],
\end{equation}
\begin{equation}\label{r-fn}
R_{n}(x)=\frac{(2n+1)!!}{3.2^{n}n!}\left[n(n+2)U_{n}(x)-\frac{3}{2}(x+1)DU_{n}(x)\right],
\end{equation}
and
\begin{equation}\label{s-fn}
S_{n}(x)=\frac{(n+2)!(2n+1)!!}{3^{2}.2^{n}n!(n-1)!}[n(n+2)U_{n}(x)-3x DU_{n}(x)].
\end{equation}
If we use
\begin{equation}
(x^{2}-1)D^{2}U_{n}(x)=n(n+2)U_{n}(x)-3x DU_{n}(x),\hspace{.05in} n=0,1,2,3,\dots
\end{equation}
we easily find from \eqref{s-fn} that
\begin{equation}\label{s-fn-useJac}
S_{n}(x)=\frac{(n+2)!(2n+1)!!}{3^{2}.2^{n}n!(n-1)!}(x^{2}-1)D^{2}U_{n}(x), \hspace{.1in}n=1,2,3,\dots.
\end{equation}
It is clear that $Q_{0}(x)=R_{0}(x)=S_{0}(x)=0.$

We know that the generalized Chebyshev-II polynomials satisfy the symmetry relation \cite{Koornwinder},
\begin{equation}
\mathscr{U}_{n}^{(M,N)}(x)=(-1)^{n}\mathscr{U}_{n}^{(N,M)}(-x),\hspace{.1in} n=0,1,2,\dots
\end{equation}
which implies that $Q_{n}(x)=(-1)^{n}R_{n}(-x), S_{n}(x)=(-1)^{n}S_{n}(-x)$ for $n=0,1,\dots.$

From \eqref{q-fn} and \eqref{r-fn} if follows that
\begin{equation}
Q_{n}(1)=
\frac{(n+2)(2n+1)!!}{3.2^{n}(n-1)!}U_{n}(1),\hspace{.1in} n=1,2,3,\dots
\end{equation}
\begin{equation}
R_{n}(-1)=\frac{(n+2)(2n+1)!!}{3.2^{n}(n-1)!}U_{n}(-1),\hspace{.1in} n=1,2,3,\dots
\end{equation}

Note that the representations \eqref{q-fn} and \eqref{r-fn} imply that for $n=1,2,3,\dots,$ we have 
\begin{equation}\label{q-fn-sum}
Q_{n}(x)=\sum_{k=0}^{n}q_{k}\frac{(2k+1)!!}{2^{k}(k+1)!}U_{k}(x) \hspace{.15in} \text{with} \hspace{.15in} 
q_{n}=\frac{n(n+1)(2n+1)}{6}
\end{equation}
and
\begin{equation}\label{r-fn-sum}
R_{n}(x)=\sum_{k=0}^{n}r_{k}\frac{(2k+1)!!}{2^{k}(k+1)!}U_{k}(x)\hspace{.15in} \text{with} \hspace{.15in} 
r_{n}=\frac{n(n+1)(2n+1)}{6}.
\end{equation}
We also can find from \eqref{s-fn} that for $n=1,2,3,\dots,$ we have
\begin{equation}\label{s-fn-sum}
S_{n}(x)=\sum_{k=0}^{n}s_{k}\frac{(2k+1)!!}{2^{k}(k+1)!}U_{k}(x)\hspace{.15in} \text{with} \hspace{.15in}
s_{n}=\frac{(n+2)(n+1)^{2}n^{2}(n-1)}{9}.
\end{equation}

Therefore, for $M, N\geq 0,$ the generalized Chebyshev-II polynomials $\left\{\mathscr{U}_{n}^{(M,N)}(x)\right\}_{n=0}^{\infty}$ are orthogonal on the interval $[-1,1]$ with respect to the weight function
\begin{equation}
\frac{2}{\pi}(1-x)^{\frac{1}{2}}(1+x)^{\frac{1}{2}}+M\delta(x+1)+N\delta(x-1).
\end{equation}
and can be written as
\begin{equation}\label{gen-Chebyshev-II-ccomb}
\mathscr{U}_{n}^{(M,N)}(x)=\frac{(2n+1)!!}{2^{n}(n+1)!}U_{n}(x)+\sum_{k=0}^{n} \lambda_{k} \frac{(2k+1)!!}{2^{k}(k+1)!}U_{k}(x),
\end{equation}
where
\begin{equation}\label{lmda}\lambda_{k}=M q_{k}+N r_{k}+M N  s_{k}.\end{equation}

The next theorem provides a closed form for generalized Chebyshev-II polynomial $\mathscr{U}_{r}^{(M,N)}(x)$ of degree $r$ as
a linear combination of the Bernstein polynomials $B_{i}^{r}(x), i=0,1,\dots,r$ of degree $r.$ Those results  generalize \cite{rababah4} contributions to the univariate Chebyshev polynomials of the second kind. 
\begin{theorem}\label{gen-jacinBer form}
For $M,N\geq 0,$ the generalized Chebyshev-II polynomials $\mathscr{U}_{r}^{(M,N)}(x)$ of degree $r$ have the following
Bernstein representation:
\begin{equation}\label{gen-jac-inBer-r}
\mathscr{U}_{r}^{(M,N)}(x)=\frac{(2r+1)!!}{2^{r}(r+1)!}\sum\limits_{i=0}^{r}(-1)^{r-i}\vartheta_{i,r}B_{i}^{r}(x)+\sum_{k=0}^{r} \lambda_{k}\frac{(2k+1)!!}{2^{k}(k+1)!} \sum\limits_{i=0}^{k}(-1)^{k-i}\vartheta_{i,k}B_{i}^{k}(x)
\end{equation}
where $\lambda_{k}=M q_{k}+N r_{k}+M N  s_{k}$
and
$$\vartheta_{i,r}=\frac{(2r+1)^{2}}{2^{2r}(2r-2i+1)(2i+1)}\frac{\binom{2r}{r}\binom{2r}{2i}}{\binom{r}{i}}, \hspace{.05in}i=0,1,\dots,r,$$
 where $$\vartheta_{0,r}=\frac{(2r+1)}{2^{2r}}\binom{2r}{r}.$$

The coefficients $\vartheta_{i,r}$ satisfy the recurrence relation
\begin{equation}\label{recur}\vartheta_{i,r}=\frac{(2r-2i+3)}{(2i+1)}\vartheta_{i-1,r}, \hspace{.1in}i=1,\dots,r.\end{equation}
\end{theorem}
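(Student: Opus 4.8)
The plan is to obtain \eqref{gen-jac-inBer-r} directly from the orthogonal-series expansion \eqref{gen-Chebyshev-II-ccomb}, which already writes $\mathscr{U}_r^{(M,N)}(x)$ as an explicit linear combination of the univariate polynomials $U_k(x)$ for $k=0,1,\dots,r$, with the scalar weights $\frac{(2k+1)!!}{2^{k}(k+1)!}$ and $\lambda_k=Mq_k+Nr_k+MNs_k$ from \eqref{lmda}. Since the degree-$k$ Bernstein polynomials form a basis and the whole construction is linear in the $U_k$, it is enough to expand each $U_k$ individually in the Bernstein basis of its own degree and then reassemble. Concretely, I would substitute the Bernstein-basis representation \eqref{Chebyshev-II-basis fromat} of each $U_k$ into \eqref{gen-Chebyshev-II-ccomb}; this immediately reproduces the two-layer structure of \eqref{gen-jac-inBer-r}, the leading $U_r$-term feeding $\sum_{i=0}^{r}(-1)^{r-i}\vartheta_{i,r}B_i^r(x)$ and each summand of the $\lambda_k$-series feeding $\sum_{i=0}^{k}(-1)^{k-i}\vartheta_{i,k}B_i^k(x)$. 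The alternating sign $(-1)^{k-i}$ is exactly the sign generated by the substitution $x\mapsto 2x-1$, under which $\frac{x-1}{2}\mapsto-(1-x)$, and the coefficient $\vartheta_{i,k}$ emerges as the suitably normalized Bernstein coefficient of $U_k$, which from \eqref{Chebyshev-II-basis fromat} is a quotient of the half-integer binomials $\binom{k+\frac{1}{2}}{i}\binom{k+\frac{1}{2}}{k-i}$ by $\binom{k}{i}$.

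The core of the argument, and the step I expect to be the real obstacle, is turning that half-integer binomial quotient into the advertised closed form for $\vartheta_{i,r}$ while making every normalizing factorial and double factorial cancel correctly. I would expand each $\binom{r+\frac{1}{2}}{j}$ as a descending product of $j$ half-integers, extract the factor $2^{-j}$, and recognize the remaining product of consecutive odd numbers as a ratio of double factorials, so that $\binom{r+\frac{1}{2}}{i}=\frac{(2r+1)!!}{2^{i}(2r-2i+1)!!\,i!}$ and similarly for $\binom{r+\frac{1}{2}}{r-i}$. Combining the two factors with $\binom{r}{i}=\frac{r!}{i!(r-i)!}$ collapses the quotient to $\frac{[(2r+1)!!]^2}{2^{r}(2r-2i+1)!!(2i+1)!!\,r!}$. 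Converting this symmetric double-factorial expression into the stated integer-binomial form is then precisely an application of the identities assembled in Section 1.3: \eqref{half-double} rewrites $\binom{2r}{r}$ through $(2r-1)!!$ and $(2r)!!=2^{r}r!$ (the latter from \eqref{eq1}), while \eqref{2nd-eq} supplies $\binom{2r}{2i}/\binom{r}{i}=(2r-1)!!/[(2i-1)!!(2r-2i-1)!!]$; the single leftover odd factors are absorbed using $(2r+1)=(2r+1)!!/(2r-1)!!$, $(2r-2i+1)=(2r-2i+1)!!/(2r-2i-1)!!$ and $(2i+1)=(2i+1)!!/(2i-1)!!$. Careful bookkeeping of these cancellations, including the degenerate conventions $0!!=(-1)!!=1$ and the boundary case $i=0$, is where sign or constant errors are most likely to slip in, so this is the part I would verify most carefully.

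Finally I would read off the two subsidiary claims from the closed form rather than re-deriving them. Setting $i=0$ and using $\binom{2r}{0}=\binom{r}{0}=1$ together with $2r-2\cdot0+1=2r+1$ collapses the prefactor $\frac{(2r+1)^2}{2^{2r}(2r-2i+1)(2i+1)}$ to $\frac{2r+1}{2^{2r}}$, yielding $\vartheta_{0,r}=\frac{2r+1}{2^{2r}}\binom{2r}{r}$ at once. For the recurrence \eqref{recur} I would simply form the ratio $\vartheta_{i,r}/\vartheta_{i-1,r}$ from the closed form: the $i$-independent factor $\frac{(2r+1)^2}{2^{2r}}\binom{2r}{r}$ cancels, the factors $(2i-1)$ and $(2r-2i+1)$ drop out, the pieces $\binom{2r}{2i}/\binom{2r}{2i-2}=\frac{(2r-2i+2)(2r-2i+1)}{(2i)(2i-1)}$ and $\binom{r}{i-1}/\binom{r}{i}=\frac{i}{r-i+1}$ combine, and after using $2r-2i+2=2(r-i+1)$ and cancelling $2i$ against $i$ the quotient reduces exactly to $\frac{2r-2i+3}{2i+1}$. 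This short ratio computation also serves as an independent consistency check on the closed form obtained in the previous paragraph.
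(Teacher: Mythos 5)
Your proposal is correct and takes essentially the same route as the paper: substitute the Bernstein-basis form of each $U_k$ into the expansion \eqref{gen-Chebyshev-II-ccomb}, reduce the half-integer binomial quotient $\binom{r+\frac{1}{2}}{i}\binom{r+\frac{1}{2}}{r-i}\big/\binom{r}{i}$ to the stated closed form via the factorial/double-factorial identities of Section 1.3, and then read off $\vartheta_{0,r}$ and the recurrence. The only (immaterial) difference is that you derive \eqref{recur} by taking the ratio of the integer-binomial closed forms, while the paper gets it by manipulating the half-integer binomials directly; both are short, equivalent computations.
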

\begin{proof}
To write a generalized Chebyshev-II polynomial $\mathscr{U}_{r}^{(M,N)}(x)$ of degree $r$ as a linear combination of the Bernstein polynomial basis $B_{i}^{r}(x), i=0,1,\dots,r$ of degree $r$ in explicit form, we begin with substituting 
\eqref{binomial} into \eqref{gen-Chebyshev-II-ccomb} to get
\begin{equation}
\begin{aligned}
\mathscr{U}_{r}^{(M,N)}(x)&=\frac{(2r+1)!!}{2^{r}(r+1)!}\sum_{i=0}^{r}\frac{\binom{r+\frac{1}{2}}{r-i}\binom{r+\frac{1}{2}}{i}}{\binom{r}{r-i}}B_{r-i}^{r}(x)+\sum_{k=0}^{r}\lambda_{k}\frac{(2k+1)!!}{2^{k}(k+1)!}
\sum_{j=0}^{k}\frac{\binom{k+\frac{1}{2}}{k-j}\binom{k+\frac{1}{2}}{j}}{\binom{k}{k-j}}B_{k-j}^{k}(x)\\
&=\frac{(2r+1)!!}{2^{r}(r+1)!}\sum_{i=0}^{r}(-1)^{r-i}
\vartheta_{i,r}
B_{i}^{r}(x)+\sum_{k=0}^{r}\lambda_{k}\frac{(2k+1)!!}{2^{k}(k+1)!}
\sum_{j=0}^{k}(-1)^{k-j}
\vartheta_{j,k}
B_{j}^{k}(x),
\end{aligned}
\end{equation}
where
\begin{equation}\label{theta}\vartheta_{i,r}=\frac{\binom{r+\frac{1}{2}}{i}\binom{r+\frac{1}{2}}{r-i}}{\binom{r}{i}}, i=0,1,\dots,r.\end{equation}

Using \eqref{theta} and applying 
$\left(n+\frac{1}{2}\right)!=\frac{\sqrt{\pi}}{2^{n+1}}(2n+1)!!$
with some simplifications, we have
\begin{align*}
\binom{r+\frac{1}{2}}{i}\binom{r+\frac{1}{2}}{r-i}
&=\frac{(2r+1)\left(r-\frac{1}{2}\right)!}{(2r-2i+1)(r-i)!(i-\frac{1}{2})!}\frac{(2r+1)\left(r-\frac{1}{2}\right)!}{(2i+1)i!(r-i-\frac{1}{2})!}\\
&=\frac{2^{i}(2r+1)(r-1)!!}{2^{r}(2r-2i+1)(r-i)!(i-1)!!}\frac{2^{r-i}(2r+1)(2r-1)!!}{2^{r}i!(2i+1)(2r-2i-1)!!}\\
&=\frac{(2r+1)}{2^{r}(2r-2i+1)(r-i)!i!}\frac{(2r-1)!!}{(2i-1)!!}\frac{(2r+1)}{(2i+1)}\frac{(2r-1)!!}{(2(r-i)-1)!!}.
\end{align*}
Using the fact $(2n)!=(2n-1)!!2^{n}n!$ we get 
$$\binom{r+\frac{1}{2}}{r-i}\binom{r+\frac{1}{2}}{i}=\frac{(2r+1)^{2}}{2^{2r}(2r-2i+1)(2i+1)}\binom{2r}{r}\binom{2r}{2i}.$$

For the recurrence relation, it is clear that for $i=1,\dots,r$ we have
\begin{equation*}\vartheta_{i-1,r}=\frac{(i+\frac{1}{2})\binom{r+\frac{1}{2}}{i}\binom{r+\frac{1}{2}}{r-i}}{(r-i+\frac{3}{2})\binom{r}{i}}.\end{equation*}
Thus,
\begin{equation}\vartheta_{i-1,r}=\frac{(2i+1)(2r+1)^{2}\binom{2r}{r}\binom{2r}{2i}}{2^{2r}(2r-2i+1)(2i+1)(2r-2i+3)\binom{r}{i}}, \hspace{.1in}i=1,\dots,r.\end{equation}
\end{proof}

We conclude with an interesting integration property of the weighted generalized
Chebyshev-II polynomials with the Bernstein polynomials. To do this, we introduce the following definition.
\begin{definition}
The Eulerian integral of the first kind is a function of two complex variables defined by
$$B(x,y)=\int_{0}^{1}u^{x-1}(1-u)^{y-1}du,\hspace{.5in}\Re(x),\Re(y)>0.$$
\end{definition}
Note that the Eulerian integral of the first kind  is often called Beta integral.  We observe that the beta integral is symmetric, a change
of variables by $t=1-u$ clearly illustrates this.
\begin{theorem}\label{gen-int-ber-jac}
Let $B_{r}^{n}(x)$ be the Bernstein polynomial of degree $n$ and $\mathscr{U}_{i}^{(M,N)}(x)$ be the generalized Chebyshev-II polynomial of degree $i,$ then
for $i,r=0,1,\dots,n$ we have
\begin{equation}
\begin{aligned}
&\int_{0}^{1}x^{\frac{1}{2}}(1-x)^{\frac{1}{2}}B_{r}^{n}(x)\mathscr{U}_{i}^{(M,N)}(x)dx\\
&=\binom{n}{r}
\frac{(2i+1)!!}{2^{i}(i+1)!}
\sum_{k=0}^{i}(-1)^{i-k}\binom{i+\frac{1}{2}}{k}\binom{i+\frac{1}{2}}{i-k}B(r+k+\frac{3}{2},n+i-r-k+\frac{3}{2})\\
&+\sum_{d=0}^{i}\lambda_{d}\binom{n}{r}
\frac{(2d+1)!!}{2^{d}(d+1)!}
\sum_{j=0}^{d}(-1)^{d-j}\binom{d+\frac{1}{2}}{j}\binom{d+\frac{1}{2}}{d-j}B(r+j+\frac{3}{2},n+d-r-j+\frac{3}{2})
\end{aligned}
\end{equation}
where $B(x,y)$ is the beta function.
\end{theorem}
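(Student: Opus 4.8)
The plan is to reduce the weighted integral to the Bernstein representation of $\mathscr{U}_{i}^{(M,N)}(x)$ already established in Theorem \ref{gen-jacinBer form}, and then to exploit linearity so that everything rests on a single elementary computation: the weighted integral of a product of two Bernstein polynomials. Concretely, I would first substitute the expansion \eqref{gen-jac-inBer-r}, with $r$ replaced by $i$, for $\mathscr{U}_{i}^{(M,N)}(x)$ into $\int_{0}^{1}x^{\frac12}(1-x)^{\frac12}B_{r}^{n}(x)\mathscr{U}_{i}^{(M,N)}(x)\,dx$. Writing the Bernstein index of the Chebyshev part as $k$ and of the $\lambda$-weighted part as $j$ (with outer summation index $d$) so that the labels match the statement, the integral splits termwise into pieces of the single shape $\int_{0}^{1}x^{\frac12}(1-x)^{\frac12}B_{r}^{n}(x)B_{k}^{i}(x)\,dx$.

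The core step is to evaluate this building block. Using the monomial form \eqref{binomial}, $B_{r}^{n}(x)=\binom{n}{r}x^{r}(1-x)^{n-r}$ and $B_{k}^{i}(x)=\binom{i}{k}x^{k}(1-x)^{i-k}$, so the integrand is $\binom{n}{r}\binom{i}{k}\,x^{r+k+\frac12}(1-x)^{n+i-r-k+\frac12}$. Matching exponents against the Beta-integral definition $B(a,b)=\int_{0}^{1}u^{a-1}(1-u)^{b-1}\,du$ with $a=r+k+\tfrac{3}{2}$ and $b=n+i-r-k+\tfrac{3}{2}$ gives
\begin{equation*}
\int_{0}^{1}x^{\frac12}(1-x)^{\frac12}B_{r}^{n}(x)B_{k}^{i}(x)\,dx=\binom{n}{r}\binom{i}{k}\,B\!\left(r+k+\tfrac{3}{2},\,n+i-r-k+\tfrac{3}{2}\right),
\end{equation*}
and the very same evaluation applies with $(i,k)$ replaced by $(d,j)$.

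Finally I would reassemble the pieces. The coefficient of $B_{k}^{i}(x)$ in the Chebyshev part of \eqref{gen-jac-inBer-r} is $\frac{(2i+1)!!}{2^{i}(i+1)!}(-1)^{i-k}\vartheta_{k,i}$; multiplying by the building-block integral produces the factor $\vartheta_{k,i}\binom{i}{k}$, which by the definition \eqref{theta} collapses to $\vartheta_{k,i}\binom{i}{k}=\binom{i+\frac{1}{2}}{k}\binom{i+\frac{1}{2}}{i-k}$, exactly cancelling the denominator $\binom{i}{k}$ and yielding the first double sum on the right-hand side. Applying the identical manipulation to the $\lambda_{d}$-weighted part, term by term in $d$ and $j$, produces the second double sum with the argument pair $\bigl(r+j+\tfrac{3}{2},\,n+d-r-j+\tfrac{3}{2}\bigr)$. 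The only genuine bookkeeping hazard is keeping the two Beta-arguments as the correct half-integers—the $+\tfrac{3}{2}$ shifts arise because the weight $x^{\frac12}(1-x)^{\frac12}$ contributes an extra $\tfrac12$ to each exponent—and confirming that the $\binom{i}{k}$ coming from the product of Bernstein polynomials precisely cancels the $\binom{i}{k}$ hidden inside $\vartheta_{k,i}$; once that cancellation is verified the stated identity falls out directly.
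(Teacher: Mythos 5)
Your proposal is correct and follows essentially the same route as the paper: substitute the Bernstein representation \eqref{gen-jac-inBer-r} (in the form with $\vartheta_{k,i}=\binom{i+\frac{1}{2}}{k}\binom{i+\frac{1}{2}}{i-k}/\binom{i}{k}$ from \eqref{theta}), expand the Bernstein polynomials in monomial form so that $\binom{i}{k}$ cancels, and recognize each resulting integral as the Beta function $B(r+k+\tfrac{3}{2},\,n+i-r-k+\tfrac{3}{2})$. The only difference is organizational—you isolate the weighted Bernstein-product integral as a standalone building block, while the paper performs the same expansion inline—so nothing of substance separates the two arguments.
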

\begin{proof}
By using \eqref{gen-jac-inBer-r}, the integral $$I=\int_{0}^{1}x^{\frac{1}{2}}(1-x)^{\frac{1}{2}}B_{r}^{n}(x)\mathscr{U}_{i}^{(M,N)}(x)dx,$$ can be simplified to
\begin{equation}\begin{aligned}
I&=\frac{(2i+1)!!}{2^{i}(i+1)!}\int_{0}^{1}x^{r+\frac{1}{2}}(1-x)^{n-r+\frac{1}{2}}\binom{n}{r}\sum_{k=0}^{i}(-1)^{i-k}\frac{\binom{i+\frac{1}{2}}{k}\binom{i+\frac{1}{2}}{i-k}}{\binom{i}{k}}B_{k}^{i}(x)\\
&+\sum_{d=0}^{i}\lambda_{d}\frac{(2d+1)!!}{2^{d}(d+1)!}\int_{0}^{1}x^{r+\frac{1}{2}}(1-x)^{n-r+\frac{1}{2}}\binom{n}{r}\sum_{j=0}^{d}(-1)^{d-j}\frac{\binom{d+\frac{1}{2}}{j}\binom{d+\frac{1}{2}}{d-j}}{\binom{d}{j}}B_{j}^{d}(x)dx\\
&=\frac{(2i+1)!!}{2^{i}(i+1)!}\binom{n}{r}\sum_{k=0}^{i}(-1)^{i-k}\binom{i+\frac{1}{2}}{k}\binom{i+\frac{1}{2}}{i-k}\int_{0}^{1}x^{r+k+\frac{1}{2}}(1-x)^{n+i-r-k+\frac{1}{2}}dx\\
&+\sum_{d=0}^{i}\lambda_{d}\frac{(2d+1)!!}{2^{d}(d+1)!}\binom{n}{r}\sum_{j=0}^{d}(-1)^{d-j}\binom{d+\frac{1}{2}}{j}\binom{d+\frac{1}{2}}{d-j}\int_{0}^{1}x^{r+j+\frac{1}{2}}(1-x)^{n+d-r-j+\frac{1}{2}}dx
\end{aligned}
\end{equation}
The integrals in the last equation are the Eulerian integral of the first kind, and can be written in term of Beta function as $B(x_{i},y_{i})$ with $x_{1}=r+k+\frac{3}{2},$ $y_{1}=n+i-r-k+\frac{3}{2},$
$x_{2}=r+j+\frac{3}{2},$ and $y_{2}=n+d-r-j+\frac{3}{2}.$
\end{proof}

\section*{Acknowledgment}
The author would like to thank the editor and anonymous reviewers for their valuable comments and suggestions, which were helpful in improving the paper. 


\end{document}